\documentclass[11pt]{amsart}
\usepackage[margin=1.1in]{geometry}
\usepackage{amsmath,amssymb,amsthm,mathtools}
\usepackage[dvipsnames]{xcolor}
\usepackage[colorlinks=true,linkcolor=blue!50!black,citecolor=blue!50!black,urlcolor=blue!50!black]{hyperref}
\usepackage{enumitem}

\newtheorem{theorem}{Theorem}[section]
\newtheorem{lemma}[theorem]{Lemma}
\newtheorem{proposition}[theorem]{Proposition}
\newtheorem{corollary}[theorem]{Corollary}
\theoremstyle{definition}

\theoremstyle{remark}

\newcommand{\Z}{\mathbb{Z}}
\newcommand{\Q}{\mathbb{Q}}
\newcommand{\nint}[1]{\lVert #1\rVert}

\title[Unit fractions with semiprime denominators]{Unit fractions with semiprime denominators:\\ an elementary proof of Erd\H{o}s Problem \#306}
\author{Shisheng Li}
\email{shisheng@mail.ustc.edu.cn}
\subjclass[2020]{Primary 11D68; Secondary 11B75, 11N05}
\keywords{Egyptian fractions, squarefree semiprimes, finite Fourier analysis, Chinese remainder theorem}

\begin{document}

\begin{abstract}
We give an elementary proof that every positive rational number $a/b$ with $b$ squarefree is a finite sum of distinct unit fractions $1/n$, where each $n$ is a product of two distinct primes (Erd\H{o}s Problem \#306). After a reduction to small targets, we take a single complete bipartite graph between the primes in $(y^2,2y^2]$, together with $2$ and the primes of $b$, and a tuned initial segment of the primes in $(y^8,y^9]$, and show that some subgraph has reciprocal sum congruent to $a/b$ modulo $1$; the small total mass then forces equality. Writing the number of such subgraphs as a finite Fourier sum, we sort the frequencies into three cases using a table indexed by the two sides of the graph. The small integer frequencies give a positive main term, and all other frequencies are negligible by a divisor-counting argument and a no-wrap-around form of the Chinese remainder theorem. The only inputs about primes are Chebyshev-type bounds. The circle-method framework comes from Tang's Lean development, which gave the first proof; our construction removes its anchor-synchronisation step. The proof has been formalised in Lean~4, apart from a cited inequality of Ramanujan. This work is a human--AI collaboration: AI tools contributed substantially to the construction, the experiments and the writing.
\end{abstract}

\maketitle

\section{Introduction}

Erd\H{o}s and Graham \cite{ErGr80} asked the following question (Erd\H{o}s Problem \#306 in \cite{Bloom}): given $a/b\in\Q_{>0}$ with $b$ squarefree, are there integers $1<n_1<\dots<n_k$, each the product of two distinct primes, such that
\[
  \frac ab=\frac1{n_1}+\dots+\frac1{n_k}\ ?
\]
The condition on $b$ (for $a/b$ in lowest terms) is necessary, since a sum of reciprocals of squarefree numbers has squarefree reduced denominator. For products of three distinct primes, the analogous statement was proved for integers by Butler, Erd\H{o}s and Graham \cite{BEG15}. For two primes, Li \cite{Li26} proved it for all $a/b$ above an explicit threshold $T(r)\le1/5$ depending on the largest prime factor $r$ of $b$. The remaining difficulty lies with small values of $a/b$ whose denominator has large prime factors.

\begin{theorem}\label{thm:main}
Let $a/b\in\Q_{>0}$ with $b$ squarefree. Then there are distinct integers $1<n_1<\dots<n_k$, each a product of two distinct primes, with $a/b=\sum_{i}1/n_i$.
\end{theorem}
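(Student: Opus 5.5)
The plan follows the outline in the abstract: a greedy reduction to a small target, then an existence argument for a subgraph of a complete bipartite graph whose reciprocal sum hits the target modulo $1$, via finite Fourier analysis.

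\emph{Reduction to small targets.} Write $a/b$ in lowest terms with $b$ squarefree, and fix a large parameter $y$. Greedily subtract reciprocals $1/(pq)$ with $p<q$ primes, all $pq$ below some bound $N_0(y)$. Since $\sum 1/(pq)$ over squarefree semiprimes diverges, this brings the remainder into an interval $(0,\varepsilon]$ with $\varepsilon$ tiny. The remainder still has squarefree denominator, namely $b$ times primes at most $N_0$. Replace $b$ by this new denominator and $a/b$ by the remainder. It then suffices to represent a target $a/b\le\varepsilon$ using semiprimes all exceeding $N_0$, so that no denominator is repeated.

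\emph{The graph.} Let $P$ be the primes in $(y^2,2y^2]$. Let $Q$ consist of $2$, the primes dividing $b$, and an initial segment of the primes in $(y^8,y^9]$, with the segment's length tuned so that the total mass
\[
M=\sum_{p\in P,\,q\in Q}\frac1{pq}
\]
lies in $(a/b,\,a/b+1)$, and indeed is less than $1$. Chebyshev bounds give $\sum_{p\in P}1/p\asymp 1/\log y$, which makes this tuning possible. The edges $pq$ are distinct semiprimes larger than $N_0$ once $y$ is large. If some subset $S$ of edges has $\sum_{S}1/(pq)\equiv a/b \pmod 1$, then, because $0<\sum_S\le M<1$ and $0<a/b<1$, we get equality. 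So it suffices to count subsets $S$ with $\sum_S 1/(pq)\equiv a/b\pmod 1$.

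\emph{The Fourier sum.} Let $D=b\prod_{p\in P}p\prod_{q\in Q}q$. Each $1/(pq)$ is a fraction with denominator dividing $D$. Then
\[
\#S=\frac1D\sum_{h\bmod D} e(-ha/b)\prod_{p,q}\bigl(1+e(h/(pq))\bigr).
\]
Each factor has modulus $2|\cos(\pi h/(pq))|$. I would split the frequencies $h$ according to a table indexed by which primes of $P$ and of $Q$ make $h/(pq)$ far from an integer.

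In case (i), $h$ is a small multiple of $D/(\text{small})$, so that every $h/(pq)$ is close to an integer up to a small real shift $t$. These frequencies give a product approximately $2^{|E|}\exp(-c\,t^2\sum 1/(pq)^2)$, and summing over them yields a positive main term of order $2^{|E|}/D$ times a Gaussian-type quantity.

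In case (ii), there are many $p\in P$ with $\nint{h/(pq)}$ bounded below for many $q$, and the product saves $\exp(-c|P|)$ or better. In case (iii), $h$ is close to integral modulo most $p$ but not most $q$, or vice versa; these are handled symmetrically.

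Bounding the number of $h$ in each case uses a divisor-counting argument: $h$ being nearly divisible by many primes of $P$ forces $h$ to lie in few residue classes. It also uses a no-wrap-around CRT. Because $|Q\text{-primes}|\cdot\max P\ll y^9\cdot y^2$ is small compared to the product structure, the residues of $h$ modulo the $p$'s determine an actual small integer, rather than only a residue. This is where the scale separation between $y^2$ and $y^8$ is used.

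\emph{Main obstacle.} The hard step is (iii): showing that frequencies which are nearly integral on one side of the bipartite graph but not the other contribute negligibly. The tension is that the savings must beat the count of such $h$. My first attempt would be to use the $q$-side primes in $(y^8,y^9]$: for fixed $h$ non-integral on that side, each $p\in P$ gives a factor $\le\cos(\pi/y^{9})^{\#}$, and the cumulative loss over $|P|\asymp y^2/\log y$ primes should dominate the divisor-bound count of the bad $h$. The primes $2$ and those dividing $b$ on the $Q$-side need separate handling, since they are not in the big range. I expect them to be absorbed via the phase $e(-ha/b)$ and the main-term computation, where the condition that $b$ is squarefree and the inclusion of $2$ make the main term nonvanishing.
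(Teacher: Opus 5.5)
Your proposal has a genuine gap in the minor arcs, which are the heart of the proof: it offers no mechanism that works there. The saving you suggest for case (iii) is far too weak. A phase at distance about $y^{-9}$ from an integer costs only a factor $\cos(\pi y^{-9})=1-O(y^{-18})$. Even if all of the roughly $y^{11}$ entries of the table paid it, the total saving would be $\exp(-O(y^{-7}))$, while the residue patterns on $P$ alone number $e^{\asymp y^2}$. No count of bad $h$ can be beaten by savings of this size.

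What is missing is the following structure.
\begin{itemize}
\item Fix the residues on the small side (the row labels $\xi$) and sum over the large-side residues column by column, $\sum_h|\phi(h)|=\sum_\xi\prod_uS_u(\xi)$ as in \eqref{eq:factor}.
\item A divisor count with the test primes $w\in(y^2,2y^2]$ shows that at most one label per column has weight above $\varepsilon=\exp(-|W|\delta^2/8)$ (Lemmas~\ref{lem:count} and~\ref{lem:unique}). The count rests on the fact that a nonzero $s+u\ell$ of size at most $y^{11}$ has at most five prime factors above $y^2$.
\item No wrap-around (Lemma~\ref{lem:admissible}): if some label put every phase of column $u$ within $\delta_a=1/(8y^2)$ of an integer, the balanced numerators $J_v$ would satisfy $|J_v|\le u/4$ and all be $\equiv\zeta_u\pmod u$. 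They would then equal a single integer $M$, and the rows would be coherent.
\end{itemize}
So for incoherent rows each column loses a factor $e^{-\delta_a^2}$. This is a loss of order $y^{-4}$, measured at the scale $1/v$ rather than $1/u$. The $|U|\ge\tau y^8/H_V$ columns together give $\exp(-\tau y^4/(64H_V))$, which beats the at most $2b\cdot16^{y^2}$ row patterns. For coherent rows, every mismatched column has total weight at most $Y\varepsilon$. Expanding the product then bounds case~II by $(1+Y\varepsilon)^{|U|}-1$ for each of the $O(y^7)$ values of $M$.

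Your graph also has a mass problem, and it makes your reduction circular. You join $2$ and the primes of $b$ to $P$ rather than to the large primes. The total mass is then $(\sum_{p\in P}1/p)(\frac12+\sum_{r\mid b}1/r+\sum_{u}1/u)=O_b(1/\log y)$, and its block $(\frac12+\sum_{r\mid b}1/r)\sum_{p\in P}1/p$ cannot be tuned. Moreover, $M\in(a/b,1)$ is not the right condition. With all subsets counted equally, the main term needs the mean $M/2$ to match $a/b$: to within the standard deviation for a Gaussian main term, or to within $H_V/(2y^8)$ for the pairing argument of Proposition~\ref{prop:major}, which gives $\cos(2\pi M\beta)P(M)>0$ for $|M|\le y^7$.

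Consequently your target must lie in an interval $[\kappa_1/\log y,\kappa_2/\log y]$ with $\kappa_1,\kappa_2$ depending on $b$. The greedy threshold then depends on $y$, and so does the new denominator. Yet every Fourier estimate needs $y$ large in terms of that denominator.

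The paper avoids this in two steps.
\begin{itemize}
\item It puts $2$ and $B$ on the small side, so the whole mass is $H_V\sum_{u\in U}1/u$ with $H_V\ge\frac12$. This is fully tunable and reaches the absolute constant $\eta=1/400$ (Lemma~\ref{lem:tuning}).
\item It writes $a/b=q\cdot a/(bq)$ for a large prime $q$ and represents $a/(bq)$ at $q$ disjoint scales $y_1<\dots<y_q$.
\end{itemize}
Your greedy reduction would also work, once the small-target result holds below an absolute threshold for all large $y$.

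The row $v=2$ matters for no wrap-around as well: it is what gives $|M|\le y^7/4$. If you took $P$ as the rows, no wrap-around would only give $|M|\lesssim y^9$. For such $M$ the phases $M/(2p)$ of your edges $2p$ wrap around, so the sign argument on the major arcs fails.
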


The proof is elementary in the usual sense of the word in number theory. The only input about primes consists of Chebyshev-type bounds, namely Erd\H{o}s' bound $\prod_{p\le x}p\le4^x$ and Ramanujan's lower bound for $\theta(x)-\theta(x/2)$; neither the prime number theorem nor any complex analysis is used. The only Fourier-analytic tool is the geometric-series identity $\frac1L\sum_{h\bmod L}e(hm/L)=\mathbf 1_{L\mid m}$, a finite sum. All constants are explicit, and every threshold ``$y$ sufficiently large'' is effective, although we do not compute it.

\subsection*{Formal verification}\label{sec:lean}
The proof has been formalised in Lean~4 with Mathlib; the source files (about $3\cdot10^3$ lines) are included as ancillary files of this arXiv submission, in the directory \texttt{anc/lean}. The formalisation follows the paper step by step, with one Lean declaration for each numbered statement, whose documentation cites that number; it is a check of the present proof, not an independent proof of Theorem~\ref{thm:main}. Every condition ``$y$ sufficiently large'' is stated there as an explicit inequality in $y$, and the existence of $y$ satisfying all of them is proved. The only statement assumed without proof is Ramanujan's inequality $\theta(x)-\theta(x/2)>x/6-3\sqrt x$ for $x>300$ \cite{Ram19}, used in Lemma~\ref{lem:cheb}. It enters as an explicit hypothesis of the main Lean theorem \texttt{erdos\_306\_of\_ramanujan}, which contains no \texttt{sorry} and depends only on the standard axioms \texttt{propext}, \texttt{Classical.choice} and \texttt{Quot.sound}.

\subsection*{Outline}
A finite set of squarefree semiprimes is a finite simple graph on the primes: the edge $\{p,q\}$ stands for $pq$. Splitting $a/b$ into $q$ equal parts, for a large prime $q$, reduces the theorem to targets $a/b\le\eta$, where $\eta$ is a fixed small absolute constant (\S\ref{sec:reduction}).

For such a target, the construction is a single complete bipartite graph. On one side is $V=\{2\}\cup B\cup W$, where $B$ is the set of primes of $b$ and $W$ is the set of primes in $(y^2,2y^2]$. On the other side is the set $U$ of primes in $(y^8,Y]$, with $y^8<Y\le y^9$ chosen so that half the total mass of the graph is just below $a/b$. Every subgraph then has total mass at most $\frac12$, so a congruence modulo $1$ with $a/b$ already forces equality. It therefore suffices to show that some subgraph $S$ satisfies $\sum_{n\in S}1/n\equiv a/b\pmod1$. Let $L$ be the product of all the primes of the graph. A geometric sum over the integers $-L/2<h\le L/2$ writes the number of such subgraphs as an average over these \emph{frequencies} $h$. By the Chinese remainder theorem, a frequency is the same thing as a residue at every prime of the graph, and the absolute value $|\phi(h)|$ of the corresponding Fourier coefficient is the product of the entries of a $V\times U$ table, whose $(v,u)$ entry depends only on the residue of $h$ at $v$ (the \emph{row label}) and at $u$ (the \emph{column label}). We sort all frequencies by two questions: do the row labels all come from one small integer $M$ (\emph{coherent} rows), and if so, are all column labels congruent to $M$ as well? This gives three cases (\S\ref{sec:table}).
\begin{enumerate}[label=\Roman*.]
\item \emph{Coherent rows, all columns matching.} These are exactly the small frequencies $h=M$, $|M|\le y^7$: the major arcs (\S\ref{sec:major}). Because half the total mass of the graph is very close to $a/b$ and all the angles $M/(uv)$ are small, the contributions of $M$ and $-M$ together are nonnegative, and $h=0$ gives the main term.
\item \emph{Coherent rows, some mismatched column.} The many test primes in $W$ make a mismatched label conspicuous: by a divisor count, a mismatched column has weight at most $\exp(-y^2/(2\cdot10^7\log^3y))$ (Lemmas~\ref{lem:count} and~\ref{lem:unique}). There are exponentially many sets of mismatched columns, but what matters is their weighted sum: over all $M$ and all mismatch patterns it is at most $3y^{25}\varepsilon e^{y^{18}\varepsilon}$, where $\varepsilon$ is bounded by the exponential just displayed, and this tends to $0$.
\item \emph{Incoherent rows.} If some label made all entries of its column close to $1$, the phase numerators would be too small to differ by a nonzero multiple of $u$, and the rows would be coherent (Lemma~\ref{lem:admissible}). So every column has an entry that is not extremely close to $1$, and by the same divisor count at most one label per column matters. Writing $\tau=a/b$ and $H_V=\sum_{v\in V}1/v$, each of the $|U|\ge\tau y^8/H_V$ columns therefore loses a factor $e^{-\delta_a^2}$, $\delta_a=1/(8y^2)$, and the total loss $\exp(-\tau y^4/(64H_V))$ beats the number $\prod_{v\in V}v\le2b\cdot16^{y^2}$ of row patterns.
\end{enumerate}
Cases~II and~III are the minor arcs (\S\ref{sec:minor}).

The particular exponents are not important: with the small side at scale $y^2$ and the large side in $(y^{2k},y^{2k+2\lambda}]$, the same argument works for any fixed $k>3$ and $0<\lambda<1$, after adjusting, in terms of $k$ and $\lambda$, the constants $c_U$, $\eta$ and $\delta$, the number of large prime factors in the divisor count, and the major-arc cutoff; we take $k=4$ and $\lambda=\frac12$.

\subsection*{Relation to previous work}\label{sec:intro-relation}

\emph{Our earlier paper \cite{Li26}.} In \cite{Li26} we adapted the framework of Butler, Erd\H{o}s and Graham \cite{BEG15} to products of two primes. There the problem is reduced to showing that the subset sums of the numbers $P_N/(p_ip_j)$, where $P_N$ is the $N$-th primorial, cover a central interval. That covering is proved by induction on $N$, using Olson's theorem \cite{Ols68} in $\Z/p_{N+1}\Z$ at each step. This gives all natural numbers, and all $a/b\ge T(r)$ with $T(r)\le1/5$. That covering is proved only above the threshold $T(r)$, and the method in its present form does not reach smaller targets whose denominators have large prime factors.

The present paper does not use the covering induction or Olson's theorem. The proof here is a different argument: finite Fourier analysis on a single complete bipartite graph between two scales of primes. It treats all targets uniformly, and Theorem~\ref{thm:main} contains the main result of \cite{Li26}.

\emph{The Lean development of Tang \cite{Tan26}.} In 2026 Tang released a Lean~4 development proving Theorem~\ref{thm:main} (version 0.0.3 is archived on Zenodo); it is complete except for two stated axioms, transcribing prime estimates of Rosser and Schoenfeld \cite{RS62}, on which it is conditional. It was the first proof of Theorem~\ref{thm:main}, and the present paper would not exist without it. After \cite{Li26} we had tried a circle-method approach ourselves, without success; it was reading the source code of \cite{Tan26} that showed us that the circle method does work for this problem, and the framework of the present proof is taken from there. We have not independently re-verified the Lean development. We now describe its framework, what we adopted, how our argument differs, and what we regard as new.

\emph{The framework of \cite{Tan26}.} At the level of ideas, \cite{Tan26} proceeds as follows. The target is encoded, by finite Fourier inversion modulo $L$, as a positive Fourier coefficient of a random sum of reciprocals of semiprimes, whose mean is matched to the target. The semiprimes are built from blocks of primes, with edges inside blocks, and the primes of $b$ are attached to them. After the Chinese remainder theorem, the argument has three layers.
\begin{itemize}
\item \emph{Anchor synchronisation.} A rigidity statement shows that, for frequencies of low energy, the coordinates of an \emph{anchor} set of primes, which carries internal edges, all come from one integer label.
\item \emph{Decoding.} The remaining primes are decoded against the anchor, and the residues that are not decoded are removed by a product estimate.
\item \emph{Main term.} The small integer frequencies give a positive Gaussian main term, and all other frequencies are shown to be negligible. A total reciprocal load below $1$ then turns the congruence into equality.
\end{itemize}

\emph{What we adopted from \cite{Tan26}.}
\begin{itemize}
\item \emph{The Fourier framework:} finite Fourier inversion modulo $L$ in Chinese remainder coordinates, with phases $h/(pq)$, and a quadratic energy controlled by a Gaussian-type bound for each factor in terms of the distance of its phase from the nearest integer (for us, Lemma~\ref{lem:cos}).
\item \emph{The decoding structure:} the coordinates of one set of primes are forced to come from a single integer, and every prime joined to that set is decoded against it. This includes the observation that the phase change caused by changing one residue does not depend on the anchor (our \eqref{eq:pf}), and the triangle-inequality argument showing that at most one residue can have small energy (the idea behind our Lemma~\ref{lem:unique}).
\item \emph{The handling of the target:} attaching the primes of $b$, matching the mean, taking the small integer frequencies as the main term, obtaining exact equality from a total mass below $1$, and reducing $a/b$ to $p$ disjoint copies of $a/(bp)$.
\end{itemize}

\emph{How our argument differs.}
\begin{itemize}
\item \emph{The graph.} \cite{Tan26} uses blocks of primes with edges inside blocks, plus edges attaching the primes of $b$. We use one complete bipartite graph between two widely separated scales, $V$ up to $2y^2$ and $U$ above $y^8$, with no edges inside either side.
\item \emph{The anchor and coherence.} In \cite{Tan26} the anchor carries internal edges, and synchronising it through its low internal energy is the central step. Our small side $V$ plays the role of the anchor, but it is never synchronised: we count all its residue patterns (at most $2b\cdot16^{y^2}$) and pay for them with a small loss in each column, and a single well-aligned column already forces coherence, because small phase numerators cannot wrap around (Lemma~\ref{lem:admissible}).
\item \emph{Separation.} In \cite{Tan26} the residues of a decoded prime are separated using the anchor primes. Here the labels of a large prime $u$ are separated using test primes $w$ much smaller than $u$, by a divisor count (Lemma~\ref{lem:count}).
\item \emph{Major arcs.} \cite{Tan26} proves a Gaussian main term. Here the mean is matched so closely that every pair of major-arc terms is nonnegative, and positivity alone suffices.
\item \emph{Targets and weights.} \cite{Tan26} uses Bernoulli weights adapted to the target. We treat small targets $\tau\le1/400$ and count all subsets equally (probability $\frac12$ in that language), matching the mean by moving one endpoint (Lemma~\ref{lem:tuning}).
\end{itemize}

\emph{What is new here, and its advantages.}
\begin{itemize}
\item \emph{No synchronisation of an internally connected anchor.} The two-scale bipartite design replaces anchor synchronisation, the technical core of \cite{Tan26}, by the no-wrap-around lemma for a single column (Lemma~\ref{lem:admissible}). The separation of residues then needs only a divisor count (Lemma~\ref{lem:count}), which works with test primes much smaller than the column prime.
\item \emph{A simple classification.} All frequencies fall into three cases, determined by two questions about the $V\times U$ table (\S\ref{sec:table}), instead of a decomposition into several frequency ranges.
\item \emph{No asymptotic analysis on the major arcs.} No Gaussian approximation, Taylor expansion or variance computation is needed.
\item \emph{Weaker inputs.} We use only Chebyshev-type bounds (Erd\H{o}s' $\prod_{p\le x}p\le4^x$ and Ramanujan's bound for $\theta(x)-\theta(x/2)$), with all constants explicit, instead of the Rosser--Schoenfeld estimates.
\item \emph{Checkable by hand.} The argument is short enough to be checked by hand. It has also been formalised in Lean (about $3\cdot10^3$ lines), apart from one cited inequality of Ramanujan (see ``Formal verification'' above).
\end{itemize}

\section{Two bounds for primes}\label{sec:inputs}

The letters $p,q,r,u,v,w$ denote primes, and $\log$ is the natural logarithm. We need two Chebyshev-type bounds.

\begin{lemma}[Chebyshev-type bounds]\label{lem:cheb}
For all $x\ge2$ we have $\prod_{p\le x}p\le 4^x$. For all sufficiently large $x$,
\[
 \frac{x}{7\log x}\ \le\ \pi(x)-\pi(x/2).
\]
\end{lemma}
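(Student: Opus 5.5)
The first inequality is Erd\H{o}s' classical bound, and I would prove it by strong induction on $x$; it suffices to treat integer $n=\lfloor x\rfloor$. The cases $n\le 2$ are immediate. If $n>2$ is even, then $n$ is not prime, so $\prod_{p\le n}p=\prod_{p\le n-1}p\le 4^{n-1}$. If $n=2m+1$ is odd, I would split
\[
 \prod_{p\le 2m+1}p=\prod_{p\le m+1}p\cdot\prod_{m+1<p\le 2m+1}p .
\]
Every prime in $(m+1,2m+1]$ divides $\binom{2m+1}{m}$. Since $\binom{2m+1}{m}$ and $\binom{2m+1}{m+1}$ are two equal terms of the binomial expansion of $(1+1)^{2m+1}$, we get $\binom{2m+1}{m}\le 4^m$. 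The induction hypothesis then gives the bound $4^{m+1}\cdot4^m=4^{2m+1}$.

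For the second inequality I would use Ramanujan's bound $\theta(x)-\theta(x/2)>x/6-3\sqrt x$ for $x>300$, cited from \cite{Ram19}, and convert it from $\theta$ to $\pi$. Each prime $p\in(x/2,x]$ contributes $\log p\le\log x$ to $\theta(x)-\theta(x/2)$. Hence
\[
 \pi(x)-\pi(x/2)\ \ge\ \frac{\theta(x)-\theta(x/2)}{\log x}\ >\ \frac{x/6-3\sqrt x}{\log x}.
\]
It remains to check that $x/6-3\sqrt x\ge x/7$ for large $x$. This is equivalent to $x/42\ge 3\sqrt x$, that is, $\sqrt x\ge126$, so $x\ge 126^2$ suffices, and this also satisfies $x>300$. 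This gives an explicit threshold, as the formalisation requires.

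There is no real obstacle: the only nonelementary input is Ramanujan's inequality, which is cited rather than proved. The only care needed is to handle noninteger $x$ in the first bound (reduce to $\lfloor x\rfloor$, since $4^{\lfloor x\rfloor}\le4^x$) and to keep the constant $7$ consistent with Ramanujan's $1/6$.
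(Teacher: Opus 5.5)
Your proposal is correct and follows the paper's proof: the second bound comes from Ramanujan's inequality together with $\log p\le\log x$, exactly as in the paper, and for the first bound you write out the standard binomial-coefficient induction behind Erd\H{o}s' inequality, which the paper only cites. Your explicit threshold $x\ge126^2$ is also right, since $x/6-3\sqrt x\ge x/7$ is equivalent to $\sqrt x\ge126$.
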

\begin{proof}
The first bound is Erd\H{o}s' classical inequality \cite{Erd32}. For the second, Ramanujan \cite{Ram19} proved $\theta(x)-\theta(x/2)>x/6-3\sqrt x$ for $x>300$, where $\theta(x)=\sum_{p\le x}\log p$. Each prime in $(x/2,x]$ contributes at most $\log x$ to $\theta(x)-\theta(x/2)$, so $\pi(x)-\pi(x/2)\ge(x/6-3\sqrt x)/\log x$, and this is at least $x/(7\log x)$ for large $x$.
\end{proof}

The first bound will control the number of residue patterns on the small side; the second counts primes in dyadic intervals. The following consequence is used only once, in the tuning lemma, to guarantee that the large side has enough reciprocal mass.

\begin{corollary}\label{cor:mertens}
For all sufficiently large $y$,
\[
 \sum_{y^8<p\le y^9}\frac1p\ \ge\ c_U:=\frac1{50} .
\]
\end{corollary}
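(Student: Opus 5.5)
We split $(y^8,y^9]$ into dyadic blocks and apply the lower bound of Lemma~\ref{lem:cheb} to each block; Mertens' theorem is not needed. For $j\ge0$ put $x_j=y^9/2^j$. For each $j$ with $x_j/2\ge y^8$, every prime in $(x_j/2,x_j]$ satisfies $1/p\ge 1/x_j$. Hence that block contributes at least
\[
 \frac{\pi(x_j)-\pi(x_j/2)}{x_j}\ \ge\ \frac{1}{7\log x_j}\ \ge\ \frac{1}{63\log y},
\]
using Lemma~\ref{lem:cheb} (valid since $x_j\ge y^8$ is large) and $\log x_j\le 9\log y$.

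The blocks $(x_j/2,x_j]$ are disjoint and lie inside $(y^8,y^9]$ as long as $2^{j+1}\le y$. So the number of usable blocks is $J=\lfloor\log_2 y\rfloor$, which is at least $\log y/\log 2-1$. Summing over these blocks gives
\[
 \sum_{y^8<p\le y^9}\frac1p\ \ge\ \frac{J}{63\log y}\ \ge\ \frac{1}{63\log 2}-\frac{1}{63\log y}.
\]
Since $63\log 2\approx 43.7<50$, the right side exceeds $1/50$ once $y$ is sufficiently large, which proves the claim.

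The only point needing care is the constant. The constant $7$ in Lemma~\ref{lem:cheb} and the factor $9$ from $\log x_j\le 9\log y$ together give $63\log2\approx 43.7$. This clears the threshold $50$ with some room, so nothing beyond the dyadic count is required. The "sufficiently large" condition is effective: we need $y^8$ beyond the threshold of Lemma~\ref{lem:cheb}, and we need $1/(63\log y)$ to be smaller than $1/(63\log2)-1/50$.
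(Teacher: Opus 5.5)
Your argument is correct and is essentially the paper's proof: split $(y^8,y^9]$ into dyadic blocks and let each block contribute at least $1/(7\log x)$ via Lemma~\ref{lem:cheb}. The only difference is bookkeeping. You anchor the blocks at $y^9/2^j$ and use the uniform bound $\log x_j\le 9\log y$, which gives $1/(63\log 2)\approx 0.0229$; the paper takes the blocks $(2^{j-1},2^j]$ and uses $\sum 1/j\to\log\frac98$, which gives about $0.0243$. Both exceed $1/50$.
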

\begin{proof}
By Lemma~\ref{lem:cheb}, for large $j$ the dyadic block $(2^{j-1},2^j]$ contains at least $2^j/(7j\log2)$ primes, each with $1/p\ge2^{-j}$, so the block contributes at least $1/(7j\log 2)$ to the sum. The blocks lying inside $(y^8,y^9]$ are those with $8\log_2y+1\le j\le9\log_2y$, and $\sum1/j$ over this range of $j$ tends to $\log\frac98$ as $y\to\infty$. Since $\log\frac98/(7\log2)>0.024>\frac1{50}$, the bound follows for large $y$.
\end{proof}

\section{Reduction to small targets}\label{sec:reduction}

The construction of \S\ref{sec:construction} only works for small targets: we need the total mass of the graph to be at most $\frac12$, so that a congruence modulo $1$ forces equality, and we need the large side to carry enough reciprocal mass to reach the target. With $c_U$ as in Corollary~\ref{cor:mertens}, fix
\[
 \eta=\min\big\{\tfrac14,\ \tfrac18c_U\big\}=\frac1{400}.
\]

\begin{theorem}[Small targets]\label{thm:small}
Let $\tau=a/b\in(0,\eta]$ with $b$ squarefree. Then for every sufficiently large integer $y$ (in terms of $b$), $\tau$ is a finite sum of distinct $1/(vu)$ with $v\le 2y^2<y^8<u\le y^9$ primes.
\end{theorem}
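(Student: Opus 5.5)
Following the outline, I would prove Theorem~\ref{thm:small} by the circle method on the complete bipartite graph $V\times U$. Here $V=\{2\}\cup B\cup W$, with $B$ the primes of $b$ and $W$ the primes in $(y^2,2y^2]$; for $y$ large all of $V$ lies below $2y^2$. The other side is $U=\{y^8<u\le Y\}$. First comes a tuning step. Using Corollary~\ref{cor:mertens} and $\tau\le\eta\le c_U/8$, I choose the endpoint $Y\le y^9$ so that $\frac12H_VH_U$ lies just below $\tau$, where $H_V=\sum_{v\in V}1/v$ and $H_U=\sum_{u\in U}1/u$. Since one prime moves $H_U$ by less than $y^{-8}$, the gap satisfies $0\le\tau-\frac12H_VH_U\le H_V y^{-8}$. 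Note $H_V\ge\frac12$, and also $H_V\le\frac12+\sum_{p\mid b}1/p+O(1/\log y)$, which is bounded in terms of $b$. Hence $H_VH_U\le 2\tau\le\frac12$, so every subgraph has mass below $1$. A subgraph $S$ with $\sum_{n\in S}1/n\equiv\tau\pmod 1$ then satisfies $\sum_{n\in S}1/n=\tau$, since $\tau<1$ and the subgraph sum lies in $[0,1)$. It therefore suffices to show that the number
\[
N=\frac1L\sum_{-L/2<h\le L/2}\ \prod_{v,u}\bigl(1+e(h/(vu))\bigr)\,e(-h\tau)
\]
of such subgraphs is positive, where $L=\prod_{V\cup U}p$ and $bL\cdot\tau\in\Z$ makes the phase well defined modulo $L$.

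Next I would use CRT to identify $h$ with its residues $h_v$, $h_u$. The modulus of the $(v,u)$ factor is $2|\cos(\pi x_{vu})|$, where $x_{vu}\equiv h\overline{u}/v+h\overline{v}/u$ depends only on $(h_v,h_u)$. Lemma~\ref{lem:cos} then bounds it by $2\exp(-c\nint{x_{vu}}^2)$. I would split the frequencies into the three cases of the outline. For the major arcs, $h=M$ with $|M|\le y^7$, all angles $M/(vu)$ are at most $y^{-3}$. The term for $M$ is $2^{|V||U|}$ times $\prod\cos(\pi M/vu)$ times $e\bigl(M(\frac12H_VH_U-\tau)\bigr)$. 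Its phase $M\cdot O(y^{-8})\cdot H_V$ is tiny, so pairing $M$ with $-M$ gives a real nonnegative contribution, namely a product of positive cosines times $\cos$ of a small angle. The term $h=0$ contributes $2^{|V||U|}$.

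For the minor arcs I would first prove a counting lemma. Two labels $h_u\ne h_u'$ of a large prime $u$ can both give phases near integers for a test prime $w\in W$ only if $w$ divides a fixed nonzero number of size at most $y^{O(1)}$; by divisor counting this happens for only $O(1)$ of the $\gg y^2/\log y$ primes of $W$. So a mismatched column, in which the label disagrees with the coherent $M$, loses $\exp(-c y^2/\log^3 y)$. Summing over $M$ and over mismatch sets with the binomial theorem gives case~II, as in the outline: a total of $(1+\varepsilon)^{|U|}-1\le |U|\varepsilon e^{|U|\varepsilon}$, times the $\le y^{25}$ choices of $M$.

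For case~III I would show a no-wrap-around lemma. If some column $u$ has all entries within $\delta$ of $1$, then the numerators $h_v$, lifted to small integers, must all agree with one integer $M$ of size $<u$. This holds because their differences are multiples of $u$ of size $<u$, and that forces coherence. So when the rows are incoherent, every column has an entry with $\nint{x}\ge\delta_a=1/(8y^2)$, and at most one label per column escapes this. Each column therefore loses a factor $e^{-c\delta_a^2}$, and summing over all $\prod_v v\le 2b\,16^{y^2}$ row patterns still leaves a total $\le 2b\,16^{y^2}\exp(-c|U|y^{-4})$. This is $o(1)\cdot 2^{|V||U|}$, because $|U|\ge\tau y^8/H_V$.

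I expect the main obstacle to be the incoherent case. There one must show that, column by column, only one label per column avoids the loss, uniformly over the row patterns, and that the per-column loss $\delta_a^2$ times $|U|$ beats $y^2\log16$; this is where the exponent gap between $y^2$ and $y^8$ is essential. I would first try to get this from the admissibility lemma combined with the same divisor count used in case~II.
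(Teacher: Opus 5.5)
Your outline matches the paper's proof step for step: tuning, the congruence modulo $1$ forcing equality, the Fourier count over the $V\times U$ table, the three cases, separation and no wrap-around. But the separation step, on which both minor-arc cases rest, is stated incorrectly.

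You claim that two labels $h_u\ne h_u'$ can both be near-integral at a test prime $w$ only if $w$ divides \emph{a fixed} nonzero number, so that only $O(1)$ primes of $W$ are exceptional. That holds only when the threshold $\delta$ for ``near-integral'' is below about $1/(4y^2)$. At such a threshold the resulting loss $\exp(-c|W|\delta^2)=\exp(-c/(y^2\log y))$ is worthless. To get your loss $\exp(-cy^2/\log^3y)$ you need a much larger threshold, and then the picture changes. With $s=h_u-h_u'$ balanced, the condition $\nint{s\bar w/u}\le\delta$ means $tw=s+u\ell$ with $|t|\le\delta u$, and $\ell$ ranges over about $4\delta y^2+2$ values, not one. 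Each nonzero $s+u\ell$ has absolute value at most $y^{11}$, hence at most five prime factors above $y^2$. So there are at most $5(4\delta y^2+2)$ exceptional $w$, a number of order $\delta y^2$ rather than $O(1)$. Nothing better is true in general: averaging over $s\not\equiv0\pmod u$, the mean number of exceptional $w$ is about $2\delta|W|$. The argument closes only because one takes $\delta\asymp1/\log y$ (the paper's $\delta=1/(400\log y)$), which keeps the exceptional set below $|W|/2$. The remaining rows then give $F_u(\xi,\zeta_u)F_u(\xi,\zeta_u')\le\exp(-|W|\delta^2/4)$. This trade-off between $\delta$ and $|W|\asymp y^2/\log y$ is exactly where the $\log^3y$ comes from. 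Your case~III needs at most one heavy label per column, so it rests on the same corrected count.

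Three smaller points.
\begin{enumerate}
\item In the no-wrap-around step, ``one integer $M$ of size $<u$'' does not yet put $h$ in your major arcs $|M|\le y^7$. You must use the row $v=2$: $M$ equals the balanced lift at $v=2$, so $|M|\le2\delta_au\le Y/(4y^2)\le y^7/4$. For $|M|$ of order $u$, neither $\cos(\pi M/(uv))>0$ nor $\cos\bigl(2\pi M(\tfrac12H_VH_U-\tau)\bigr)>0$ would be available; this is the role of the prime $2$ in $V$.
\item In case~II each mismatched column carries up to $u-1<Y$ light labels, so its remainder is at most $Y\varepsilon$ and the bound is $(1+Y\varepsilon)^{|U|}-1$. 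There are only $2y^7+1$ values of $M$; the paper's $y^{25}$ is $3y^7\cdot|U|\cdot Y$. This slip is harmless, since $\varepsilon$ beats every power of $y$.
\item The major-arc angles are only bounded by $1/(2y)$, because of the rows $v\in\{2\}\cup B$. The integrality you need is $L\tau\in\Z$, which holds because $b\mid L$.
\end{enumerate}
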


Theorem~\ref{thm:small} is proved in \S\S\ref{sec:construction}--\ref{sec:minor}. The freedom in $y$ is what allows us to add up several representations without collisions.

\begin{proof}[Proof of Theorem~\ref{thm:main} from Theorem~\ref{thm:small}]
Let $a/b>0$ with $b$ squarefree. We may assume $a/b$ is in lowest terms, since reducing it only shrinks the squarefree denominator. There are infinitely many primes, so we can choose a prime $q\nmid ab$ so large that $a/(bq)\le\eta$. Since $q\nmid a$ and $q\nmid b$, the fraction $x=a/(bq)$ is again in lowest terms, and its denominator $bq$ is squarefree.

Now choose $y_1<y_2<\dots<y_q$, each large enough for Theorem~\ref{thm:small} applied to $x$, and with $y_{i+1}^8>y_i^9$; then the intervals $(y_i^8,y_i^9]$ are pairwise disjoint. For each $i$, Theorem~\ref{thm:small} with $y=y_i$ gives a representation of $x$ by distinct semiprimes $vu$ whose larger prime factor $u$ lies in $(y_i^8,y_i^9]$. A semiprime used in the $i$-th representation has its larger prime factor in the $i$-th interval, so it cannot occur in any other representation. Hence the $q$ representations together use pairwise distinct semiprimes, and their sum is $qx=a/b$.
\end{proof}

\section{The construction}\label{sec:construction}

Fix $\tau=a/b\in(0,\eta]$ with $b$ squarefree. Replacing $a/b$ by its reduced form keeps $b$ squarefree, so we may assume $\gcd(a,b)=1$. Let $B$ be the set of primes dividing $b$. Since $0<\tau<1$ we have $b>1$, so $B\ne\emptyset$. Let $y>\max B$ be an integer, large in terms of $b$ (since $\tau\ge1/b$, thresholds depending on $\tau$ also depend only on $b$), and put
\begin{gather*}
 W=\{w:\ y^2<w\le 2y^2\},\qquad V=\{2\}\cup B\cup W;\\
 U=U(Y)=\{u:\ y^8<u\le Y\},\qquad y^8<Y\le y^9.
\end{gather*}
The graph is the complete bipartite graph between the \emph{small side} $V$ and the \emph{large side} $U$. Each part of $V$ has its own job. The primes of $B$ are there so that the denominator $b$ divides the product of all primes of the graph. The prime $2$ gives the small side a fixed amount of reciprocal mass, and later it pins down the size of an integer (Lemma~\ref{lem:admissible}). The primes of $W$ are the \emph{test primes}: there are many of them, and they distinguish residues at a large prime (Lemmas~\ref{lem:count} and~\ref{lem:unique}).

Let $H_V=\sum_{v\in V}1/v$ be the sum of the reciprocals of the small side. For $y$ large we have
\begin{equation}\label{eq:basic}
 |W|\ge\frac{y^2}{8\log y},\qquad \prod_{v\in V}v\le 2b\cdot16^{y^2},\qquad
 \tfrac12\le H_V\le\tfrac32+\sum_{r\in B}\frac1r,\qquad \sum_{u\in U(y^9)}\frac1u\ge c_U.
\end{equation}
Indeed, Lemma~\ref{lem:cheb} with $x=2y^2$ gives $|W|\ge2y^2/(7\log 2y^2)\ge y^2/(8\log y)$. The primes of $B$ multiply to $b$, and $W$ consists of primes up to $2y^2$, so $\prod_Vv\le2\cdot b\cdot4^{2y^2}$ by Lemma~\ref{lem:cheb}. Next, $H_V\ge\frac12$ because $2\in V$, and $H_V\le\frac12+\sum_{r\in B}1/r+1$ because $\sum_{w\in W}1/w\le|W|/y^2\le1$. The last bound is Corollary~\ref{cor:mertens}.

Let $A=A(Y)=\{vu:\ v\in V,\ u\in U\}$ be the set of edges. Since $V\cap U=\emptyset$, each edge $n=vu$ determines the pair $(v,u)$, and we index $A$ by these pairs. Put
\[
 \mu(Y)=\frac12\sum_{(v,u)\in A}\frac1{uv}=\frac12H_V\sum_{u\in U(Y)}\frac1u ,
\]
that is, half the total mass of $A$. The next lemma chooses the endpoint $Y$ so that $\mu$ is just below $\tau$.

\begin{lemma}[Tuning]\label{lem:tuning}
There is a prime $Y\in(y^8,y^9]$ with
\[
 0\le\tau-\mu(Y)\le\frac{H_V}{2y^8},\qquad \mu(Y)\ge\frac\tau2,\qquad |U|\ge\frac{\tau y^8}{H_V}.
\]
\end{lemma}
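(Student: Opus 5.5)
We argue by a discrete intermediate value argument on the step function $\mu$. As $Y$ runs through the primes of $(y^8,y^9]$ in increasing order, $\mu(Y)=\frac12H_V\sum_{y^8<u\le Y}1/u$ is strictly increasing. Passing from one prime $u$ to the next increases $\mu$ by exactly $H_V/(2u)<H_V/(2y^8)$. At the top end, \eqref{eq:basic} gives
\[
\mu(y^9)\ge\tfrac12H_Vc_U\ge\tfrac14c_U\ge2\eta\ge2\tau,
\]
using $H_V\ge\frac12$ and $\eta\le\frac18c_U$. At the bottom end, let $u_0$ be the least prime exceeding $y^8$; it exists and is at most $y^9$, for instance by Lemma~\ref{lem:cheb}. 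Then $\mu(u_0)=H_V/(2u_0)<H_V/(2y^8)$. Since $H_V\le\frac32+\sum_{r\in B}1/r$ is bounded in terms of $b$ and $\tau\ge1/b$, this is below $\tau$ once $y$ is large in terms of $b$.

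We then take $Y$ to be the largest prime in $(y^8,y^9]$ with $\mu(Y)\le\tau$. This $Y$ exists because $u_0$ qualifies, and $Y<y^9$'s last prime because $\mu$ at the last prime is at least $2\tau>\tau$. So $Y$ has a successor prime $Y'\le y^9$ with $\mu(Y')>\tau$. Hence
\[
0\le\tau-\mu(Y)<\mu(Y')-\mu(Y)=\frac{H_V}{2Y'}<\frac{H_V}{2y^8},
\]
which is the first claim. The second claim follows from the first, since $\mu(Y)\ge\tau-H_V/(2y^8)\ge\tau/2$ for $y$ large in terms of $b$.

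For the third claim, each $u\in U$ satisfies $1/u<1/y^8$. Therefore
\[
\tfrac\tau2\le\mu(Y)=\tfrac12H_V\sum_{u\in U}\tfrac1u\le\tfrac12H_V\,\frac{|U|}{y^8},
\]
and this rearranges to $|U|\ge\tau y^8/H_V$.

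No step is a real obstacle. The only point needing care is the endpoint bookkeeping: $\mu$ at the first prime must already be below $\tau$, and $\mu(y^9)$ must reach $\tau$. The latter is where $\eta\le\frac18c_U$ and $H_V\ge\frac12$ enter, and all "large $y$" thresholds depend only on $b$ through the bound on $H_V$ and $\tau\ge1/b$.
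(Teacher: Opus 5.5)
Your proof is correct and follows the paper's argument essentially step for step. You add the primes of $(y^8,y^9]$ in increasing order, use $\mu(y^9)\ge\frac14c_U\ge2\tau$ and $\mu(u_0)<\tau$ to stop at the last prime $Y$ with $\mu(Y)\le\tau$, bound the overshoot by one increment $H_V/(2Y')<H_V/(2y^8)$, and deduce $|U|\ge\tau y^8/H_V$ from $1/u<1/y^8$, exactly as the paper does.
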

\begin{proof}
Add the primes of $(y^8,y^9]$ to $U$ one at a time, in increasing order. Adding $u$ increases $\mu$ by $H_V/(2u)$, which is less than $H_V/(2y^8)$, and this is less than $\tau/2$ for $y$ large. With only the first prime, $\mu<\tau$. With all of them, $\mu\ge\frac12\cdot\frac12c_U\ge2\eta\ge2\tau$ by \eqref{eq:basic} and the choice of $\eta$. So we can stop at the last prime $Y$ with $\mu(Y)\le\tau$. Adding the next prime would push $\mu$ above $\tau$, so $\tau-\mu(Y)<H_V/(2y^8)<\tau/2$; in particular $\mu(Y)\ge\tau/2$. Finally, $\sum_{u\in U}1/u=2\mu/H_V\ge\tau/H_V$, and every $u$ exceeds $y^8$, so $|U|\ge\tau y^8/H_V$.
\end{proof}

From now on $Y$ is as in Lemma~\ref{lem:tuning}, and we write $\beta=\mu-\tau$, so that $-H_V/(2y^8)\le\beta\le0$.

The total mass of the graph is $2\mu\le2\tau\le\frac12$, so every $S\subseteq A$ satisfies
\[
 0\le\sum_{n\in S}\frac1n\le\tfrac12 .
\]
Suppose $\sum_{n\in S}1/n\equiv\tau\pmod1$. The difference $\sum_{n\in S}1/n-\tau$ is then an integer, and it lies in $[-\tau,\frac12-\tau]\subset(-1,1)$, so it is $0$. Thus a congruence modulo $1$ already gives an exact representation of $\tau$. The edges $vu$ are distinct semiprimes with $v\le 2y^2<y^8<u\le y^9$. So Theorem~\ref{thm:small} follows from:

\begin{proposition}\label{prop:core}
There is $S\subseteq A$ with $\sum_{n\in S}1/n\equiv\tau\pmod1$.
\end{proposition}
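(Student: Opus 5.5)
We count the subsets $S\subseteq A$ with $\sum_{n\in S}1/n\equiv\tau\pmod 1$ by finite Fourier analysis and show that the count is positive. Let $L$ be the product of all primes of $V\cup U$. Since $b\mid L$ and every $1/(uv)$ has denominator dividing $L$, the quantity $L(\sum_{n\in S}1/n-\tau)$ is an integer. The congruence holds exactly when $L$ divides it. The geometric-series identity then gives
\[
 N:=\#\{S\}=\frac{2^{|A|}}{L}\sum_{-L/2<h\le L/2}\phi(h),\qquad \phi(h)=e(-h\tau)\prod_{(v,u)\in A}\frac{1+e(h/(uv))}{2}.
\]
Each factor has absolute value $|\cos(\pi h/(uv))|$, which is at most $\exp(-c\nint{h/(uv)}^2)$ for an absolute constant $c>0$ (the Gaussian bound, Lemma~\ref{lem:cos}). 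It then suffices to show that $\Re\sum_h\phi(h)>0$. We do this by splitting the frequencies $h$ into the three cases of the outline.

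\emph{Major arcs.} These are the frequencies $h=M$ with $|M|\le y^7$. Here every angle $M/(uv)$ has size at most $y^7/y^8$, so the phases are small and can be expanded. Pairing $M$ with $-M$ gives a contribution $2\Re\phi(M)$. This equals $2\prod_{(v,u)}\cos(\pi M/(uv))$ times $\cos\bigl(2\pi M\mu-2\pi M\tau\bigr)=\cos(2\pi M\beta)$, because the argument of $\prod(1+e(\theta))/2$ is $\pi\sum\theta=2\pi M\mu$. The bound $|\beta|\le H_V/(2y^8)$ gives $|M\beta|\ll y^{-1}$, so the cosine is positive. Each factor $\cos(\pi M/(uv))$ is positive as well. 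Hence every pair is nonnegative, and $h=0$ contributes $1$.

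\emph{Minor arcs.} By the Chinese remainder theorem, $h$ corresponds to residues $h_v\bmod v$ and $h_u\bmod u$. Then $\nint{h/(uv)}=\nint{(h_v\bar u)/v+(h_u\bar v)/u}$ up to integers, so the modulus of the entry depends only on the row label and the column label. There are two sub-cases.

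In Case~III, suppose some column $u$ has all $|V|$ entries very close to $1$. Then the numerators $m_v$ with $h\equiv m_v$ are small, and by no-wrap-around (Lemma~\ref{lem:admissible}) these numerators must come from a single small integer $M$; this is coherence. So if the rows are incoherent, every column has an entry bounded away from $1$. The divisor count (Lemmas~\ref{lem:count} and~\ref{lem:unique}) shows that, once the rows are fixed, at most one column label per column avoids an extreme loss. Summing over the column labels therefore costs only a factor of roughly $1+\varepsilon$ per column, beyond the single surviving label with its loss $e^{-\delta_a^2}$. Multiplying over the columns gives $\exp(-\tau y^4/(64H_V))$. Summing over at most $2b\cdot16^{y^2}$ row patterns gives $2b\cdot16^{y^2}$ times this, which is $o(1)$.

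In Case~II, the rows come coherently from some $M$ with $|M|\le y^7$ but some column is mismatched. Each mismatched column costs a factor at most $\varepsilon=\exp(-y^2/(2\cdot10^7\log^3 y))$. Summing over $M$ and over all mismatch patterns, with at most $u\le y^9$ choices per column, gives a bound like $y^{7}\bigl((1+y^9\varepsilon)^{|U|}-1\bigr)\le 3y^{25}\varepsilon e^{y^{18}\varepsilon}$, which tends to $0$.

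The main obstacle is the divisor-counting separation in Lemma~\ref{lem:count}. It must show that two distinct residues $h_u\ne h'_u$ at a large prime $u$ cannot both make many test-prime entries $w\in W$ nearly $1$. A coincidence at many $w$ forces $w$ to divide a nonzero integer of size about $y^{9}\cdot y^{7}$ built from $(h_u-h'_u)\bar w$. Only a bounded number of primes $w>y^2$ can divide such an integer, while $|W|\ge y^2/(8\log y)$ by \eqref{eq:basic}. Making this quantitative, and uniform over the incoherent case, is where the care goes. Combining the three cases gives $\Re\sum_h\phi(h)\ge 1-o(1)>0$, so $N>0$.
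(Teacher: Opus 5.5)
Your proposal is correct and follows the paper's proof essentially step for step. The steps are the same: Fourier counting over $-L/2<h\le L/2$; nonnegative $\pm M$ pairs $2\cos(2\pi M\beta)P(M)$ on the major arcs; Case~III by no-wrap-around plus separation, set against the $2b\cdot16^{y^2}$ row patterns; and Case~II by the bound $(1+Y\varepsilon)^{|U|}-1$ summed over $M$.

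There are only minor points of precision. In Case~II, a mismatched label has weight at most $\varepsilon$ by Lemma~\ref{lem:unique}(ii), because the matching label has test-row phases at most $|M|/(uw)\le y^{-3}\le\delta/2$. Your sketch of Lemma~\ref{lem:count} is off: each invisible $w$ divides one of about $4\delta y^2+2$ nonzero integers $s+u\ell$, each of size at most $y^{11}$, rather than a single integer. It is the count of these integers that must be small compared with $|W|$. Since you cite the lemma, this does not affect your argument.
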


\section{Counting with a geometric sum}\label{sec:count}

Let $L=\prod_{p\in V\cup U}p$. Since $B\subseteq V$, we have $b\mid L$, so every $1/(uv)$ as well as $\tau$ is an integer multiple of $1/L$. For $S\subseteq A$ write $\sigma(S)=\sum_{n\in S}1/n$, and let $N$ be the number of $S\subseteq A$ with $\sigma(S)\equiv\tau\pmod1$. We must show $N>0$.

We write $e(x)=e^{2\pi i x}$; this function has period $1$. From now on, $h$ always runs over the integers with $-L/2<h\le L/2$, and $\sum_h$ denotes the sum over them; these $h$ form a complete set of residues modulo $L$, and we call them \emph{frequencies}. For an integer $m$, the geometric sum $\frac1L\sum_he(hm/L)$ equals $1$ if $L\mid m$ and $0$ otherwise. For each $S$, the number $m=L(\sigma(S)-\tau)$ is an integer, and it is divisible by $L$ exactly when $\sigma(S)\equiv\tau\pmod1$. So summing the geometric sum over all $S\subseteq A$ counts $N$:
\[
 N=\sum_{S\subseteq A}\frac1L\sum_he\big(h(\sigma(S)-\tau)\big)=\frac1L\sum_{h}e(-h\tau)\sum_{S\subseteq A}e\big(h\sigma(S)\big).
\]
Since $e(h\sigma(S))=\prod_{n\in S}e(h/n)$, the inner sum over all subsets factorises over the edges:
\[
 \sum_{S\subseteq A}e\big(h\sigma(S)\big)=\prod_{(v,u)\in A}\Big(1+e\Big(\frac h{uv}\Big)\Big).
\]
Dividing by the number $2^{|A|}$ of subsets and putting
\[
 \phi(h)=\prod_{(v,u)\in A}\frac{1+e(h/(uv))}{2}\qquad(h\in\Z),
\]
we obtain
\begin{equation}\label{eq:N}
 \frac{N}{2^{|A|}}=\frac1{L}\sum_{h}e(-h\tau)\,\phi(h).
\end{equation}
Each factor of $\phi(h)$ has absolute value at most $1$, and $\phi(0)=1$. So the frequency $h=0$ alone contributes $1/L$ to the right-hand side. The rest of the proof sorts all frequencies into three cases (\S\ref{sec:table}) and shows that one case contributes at least $1/L$, while the other two together contribute at most $1/(2L)$ in absolute value.

\section{The table and the three cases}\label{sec:table}

\subsection{The table}
Let $G_V=\prod_{v\in V}\Z/v\Z$ and $G_U=\prod_{u\in U}\Z/u\Z$. By the Chinese remainder theorem, the map $h\mapsto(\xi,\zeta)$, with $\xi=(h\bmod v)_{v\in V}\in G_V$ and $\zeta=(h\bmod u)_{u\in U}\in G_U$, is a bijection from our frequencies onto $G_V\times G_U$; we write $h\leftrightarrow(\xi,\zeta)$. Consider the phase
\[
 \varphi_{vu}(h):=\frac h{uv}\bmod1 .
\]
It depends only on $h$ modulo $uv$, hence, again by the Chinese remainder theorem, only on the pair $(\xi_v,\zeta_u)$: it equals $J/(uv)$, where $J$ is any integer $\equiv\xi_v\pmod v$ and $\equiv\zeta_u\pmod u$. We write $\varphi_{vu}(\xi,\zeta_u)$ for this common value.

For real $x$ we have
\begin{equation}\label{eq:half}
 1+e(x)=e\Big(\frac x2\Big)\Big(e\Big(-\frac x2\Big)+e\Big(\frac x2\Big)\Big)=2e\Big(\frac x2\Big)\cos\pi x ,
\end{equation}
so $|1+e(x)|=2|\cos\pi x|$ and $|\phi(h)|=\prod_A|\cos\pi\varphi_{vu}(h)|$. We picture this as a table: a $V\times U$ matrix whose $(v,u)$ entry is $|\cos\pi\varphi_{vu}|$. The entry depends only on the \emph{row label} $\xi_v$ and the \emph{column label} $\zeta_u$, it is at most $1$, and $|\phi(h)|$ is the product of all entries. For $u\in U$, $\xi\in G_V$ and $\zeta_u\in\Z/u\Z$, put
\[
 F_u(\xi,\zeta_u)=\prod_{v\in V}|\cos\pi\varphi_{vu}|,\qquad S_u(\xi)=\sum_{\zeta_u\in\Z/u\Z}F_u(\xi,\zeta_u).
\]
So $F_u$ is the product of the column $u$, the \emph{weight} of that column; it is at most $1$. Once all row labels $\xi$ are fixed, each column depends only on its own label $\zeta_u$, so the sum over $\zeta$ splits column by column:
\begin{equation}\label{eq:factor}
 \sum_{h}|\phi(h)|=\sum_{\xi\in G_V}\sum_{\zeta\in G_U}\prod_{u\in U}F_u(\xi,\zeta_u)=\sum_{\xi\in G_V}\ \prod_{u\in U}S_u(\xi).
\end{equation}
Trivially $S_u(\xi)\le u$. The main work is to show that $S_u(\xi)$ is in fact at most about $1$, and slightly less than $1$ unless the row labels have a very special form.

\subsection{The three cases}
We call the row labels $\xi\in G_V$ \emph{coherent} if there is an integer $M$ with $|M|\le y^7$ and $\xi_v\equiv M\pmod v$ for all $v\in V$, and \emph{incoherent} otherwise. Note that $\prod_Vv\ge\prod_{w\in W}w>(y^2)^{|W|}\ge y^{y^2/(4\log y)}$ by \eqref{eq:basic}, which exceeds $2y^7+1$ for large $y$; in particular $L>2y^7+1$. For coherent $\xi$ the integer $M$ is unique: two such integers are congruent modulo every $v\in V$, hence modulo $\prod_Vv$, and they differ by at most $2y^7<\prod_Vv$. We then write $\xi\equiv M$, and we call the column label $\zeta_u$ \emph{matching} (it matches $M$) if $\zeta_u\equiv M\pmod u$ and \emph{mismatched} otherwise.

Every frequency $h\leftrightarrow(\xi,\zeta)$ falls into exactly one of three cases.
\begin{center}
\renewcommand{\arraystretch}{1.25}
\begin{tabular}{c|c|c|p{0.36\textwidth}}
 case & rows $\xi$ & columns $\zeta$ & what the table looks like\\\hline
 I & coherent, $\xi\equiv M$ & all matching & all entries $\cos(\pi M/uv)$, positive and close to $1$\\
 II & coherent, $\xi\equiv M$ & some mismatched & a mismatched column is far from $1$ in many test rows\\
 III & incoherent & arbitrary & every column has an entry not extremely close to $1$
\end{tabular}
\end{center}
In case~I, $h\equiv M$ modulo every prime of the graph, i.e.\ $h\equiv M\pmod L$; since $|M|\le y^7<L/2$, this means $h=M$. Conversely every $h=M$ with $|M|\le y^7$ is in case~I. So case~I consists exactly of the frequencies $h$ with $|h|\le y^7$; these are the \emph{major arcs}, and cases~II and~III are the \emph{minor arcs}.

The plan is as follows.
\begin{itemize}
\item Case~I contributes at least $1/L$ (Proposition~\ref{prop:major}, \S\ref{sec:major}). This uses only that the angles $M/(uv)$ are small and that $\mu$ is close to $\tau$.
\item Two lemmas about a single column (\S\ref{sec:tools}). \emph{Separation}: two different labels of one column cannot both give it a non-negligible weight; and if one label has small phases in all test rows, every other label gives negligible weight. \emph{No wrap-around}: if some label makes all entries of a column close to $1$, then the row labels are coherent.
\item Case~II: every mismatched column has negligible weight, by separation. Case~III: every column loses a small factor, by no wrap-around and separation. Together cases~II and~III contribute at most $1/(2L)$ (Proposition~\ref{prop:minor}, \S\ref{sec:minor}).
\end{itemize}

\section{Case I: the major arcs}\label{sec:major}

On the major arcs every conjugate pair has a nonnegative contribution: all the angles $M/(uv)$ are small, and $\mu$ is within $H_V/(2y^8)$ of $\tau$. (The cutoff $y^7$ in the definition of coherence could be anywhere between $y^7/4$, which is what Lemma~\ref{lem:admissible} needs, and $y^8/(2H_V)$, which is what this section needs; $y^7$ is a convenient choice.)

\begin{proposition}\label{prop:major}
Let $\Sigma_{\rm maj}=\frac1L\sum_{|M|\le y^7}e(-M\tau)\phi(M)$ be the part of the right-hand side of \eqref{eq:N} coming from case~I. Then $\Sigma_{\rm maj}$ is real and $\Sigma_{\rm maj}\ge1/L$.
\end{proposition}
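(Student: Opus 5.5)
The approach is to compute $e(-M\tau)\phi(M)$ exactly, using \eqref{eq:half}, and to pair $M$ with $-M$. By \eqref{eq:half}, each factor of $\phi(M)$ equals $\frac{1+e(M/(uv))}{2}=e\big(\frac{M}{2uv}\big)\cos\frac{\pi M}{uv}$. Taking the product over all edges $(v,u)\in A$, the phases add up to $e\big(M\cdot\frac12\sum_A\frac1{uv}\big)=e(M\mu)$. Hence
\[
 e(-M\tau)\phi(M)=e\big(M(\mu-\tau)\big)\,P(M)=e(M\beta)\,P(M),\qquad P(M):=\prod_{(v,u)\in A}\cos\frac{\pi M}{uv}.
\]
Here $P$ is real and even in $M$, and $P(0)=1$.

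Next I would show that every factor of $P(M)$ is positive on the major arcs. For $|M|\le y^7$, every edge has $uv>2y^8$, since $v\ge2$ and $u>y^8$. So $|M/(uv)|<y^7/(2y^8)=1/(2y)<\frac12$, and each cosine is strictly positive. Therefore $P(M)>0$ for all $|M|\le y^7$.

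Then I would pair $M$ with $-M$ for $1\le M\le y^7$. Since $P(-M)=P(M)$, the two terms add to $\big(e(M\beta)+e(-M\beta)\big)P(M)=2\cos(2\pi M\beta)\,P(M)$, which is real. By Lemma~\ref{lem:tuning}, $|\beta|\le H_V/(2y^8)$, so $|2\pi M\beta|\le\pi y^7H_V/y^8=\pi H_V/y$. By \eqref{eq:basic}, $H_V$ is bounded in terms of $b$ alone, so for $y$ large (in terms of $b$) this is less than $\pi/2$. Then $\cos(2\pi M\beta)>0$, and each pair contributes a nonnegative real number.

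Summing, $\Sigma_{\rm maj}=\frac1L\big(1+\sum_{1\le M\le y^7}2\cos(2\pi M\beta)P(M)\big)$, which is real and at least $1/L$. The only step needing care is the quantitative one: the cutoff $y^7$ must keep both $M/(uv)$ and $M\beta$ small. This is exactly the range between $y^7/4$ and $y^8/(2H_V)$ noted before the proposition, and $y^7$ lies inside it for large $y$.
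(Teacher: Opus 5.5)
Your proposal is correct and follows the paper's own proof step for step. You extract the phase $e(M\mu)$ via \eqref{eq:half} to get $e(M\beta)P(M)$, show $P(M)>0$ from $|M|/(uv)<1/(2y)$, pair $M$ with $-M$, and use $|2\pi M\beta|\le\pi H_V/y<\pi/2$ for $y$ large in terms of $b$.
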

\begin{proof}
\emph{Step 1: the phase.} Applying \eqref{eq:half} to every factor of $\phi(M)$ and collecting the phases,
\[
 \phi(M)=\prod_{(v,u)\in A}e\Big(\frac M{2uv}\Big)\cos\frac{\pi M}{uv}=e(M\mu)\,P(M),\qquad P(M)=\prod_{(v,u)\in A}\cos\frac{\pi M}{uv},
\]
because $\sum_{(v,u)\in A}\frac1{2uv}=\mu$. Hence $e(-M\tau)\phi(M)=e(M\beta)P(M)$, where $\beta=\mu-\tau$.

\emph{Step 2: $P(M)>0$.} For $|M|\le y^7$, $u>y^8$ and $v\ge2$ we have $|M|/(uv)\le y^7/(2y^8)=1/(2y)<\frac12$. So every angle $\pi M/(uv)$ lies in $(-\frac\pi2,\frac\pi2)$, every factor of $P(M)$ is positive, and $P(M)>0$. Moreover $P(0)=1$, and $P(-M)=P(M)$ since cosine is even.

\emph{Step 3: pairing.} For $1\le M\le y^7$, the terms for $M$ and $-M$ add up to
\[
 \big(e(M\beta)+e(-M\beta)\big)P(M)=2\cos(2\pi M\beta)\,P(M).
\]
Therefore
\[
 \Sigma_{\rm maj}=\frac1L\Big(1+2\sum_{M=1}^{y^7}\cos(2\pi M\beta)\,P(M)\Big),
\]
which is real.

\emph{Step 4: signs.} By Lemma~\ref{lem:tuning}, $|\beta|\le H_V/(2y^8)$. So for $1\le M\le y^7$ we get $|2\pi M\beta|\le\pi H_V/y<\frac\pi2$ once $y>2H_V$, and hence $\cos(2\pi M\beta)>0$. Every term in the last sum is positive, so $\Sigma_{\rm maj}\ge1/L$.
\end{proof}

Both uses of smallness are visible here: the angles $M/(uv)$ are small because $|M|\le y^7$ is much smaller than every $u$, and the phases $M\beta$ are small because the tuning made $|\beta|$ of size $y^{-8}$.

\section{Two lemmas about a single column}\label{sec:tools}

We first fix some notation, used only from here on. For real $x$, $\nint{x}\in[0,\frac12]$ denotes the distance from $x$ to the nearest integer. For a modulus $m$, the \emph{balanced representative} of a residue class modulo $m$ is its representative in $(-m/2,m/2]$. For a prime $u$ and an integer $m$ coprime to $u$, $\bar m$ denotes the inverse of $m$ modulo $u$; below $m$ is always a small-side prime $v$ or $w$. The following elementary bound converts ``a phase is far from an integer'' into ``a table entry is small''.

\begin{lemma}\label{lem:cos}
For all real $x$ we have $|\cos\pi x|\le\exp(-2\nint{x}^2)$.
\end{lemma}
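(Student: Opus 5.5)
We reduce to $t=\nint{x}\in[0,\tfrac12]$ and then compare two elementary functions on that interval. Both sides are even in $x$ and $1$-periodic: $|\cos\pi x|$ is unchanged under $x\mapsto x+1$ and $x\mapsto -x$, and so is $\nint{x}$. Hence we may replace $x$ by the representative $t\in[0,\tfrac12]$, for which $\nint{x}=t$ and $|\cos\pi x|=\cos\pi t\ge0$. It then suffices to prove
\[
 \cos\pi t\le e^{-2t^2}\qquad(0\le t\le\tfrac12).
\]

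For the main inequality we take logarithms. At $t=\tfrac12$ the left side is $0$ and there is nothing to prove, so let $0\le t<\tfrac12$ and put $g(t)=\log\cos\pi t+2t^2$. Then $g(0)=0$ and
\[
 g'(t)=-\pi\tan\pi t+4t .
\]
Since $\tan s\ge s$ for $s\in[0,\pi/2)$, we get $\pi\tan\pi t\ge\pi^2t\ge4t$, because $\pi^2>4$. Therefore $g'\le0$ on $[0,\tfrac12)$, so $g(t)\le g(0)=0$. This is exactly $\cos\pi t\le e^{-2t^2}$.

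There is an alternative route that avoids logarithms. Use $\cos s\le1-s^2/2+s^4/24$ together with $e^{-z}\ge1-z$, and check the resulting polynomial inequality on $[0,\pi/2]$. This is messier near $t=\tfrac12$, which is why the monotonicity argument is preferable.

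No step is a real obstacle. The only points needing care are the reduction to $[0,\tfrac12]$ with the absolute value, and the endpoint $t=\tfrac12$, where $\log\cos$ is undefined and the inequality holds trivially. The constant $2$ is far from optimal: the derivative argument works for any constant up to $\pi^2/2$. This slack is comfortable for later use, for example for the factor $e^{-\delta_a^2}$ mentioned in the outline.
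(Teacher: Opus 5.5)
Your proof is correct. The reduction to $t=\nint{x}\in[0,\tfrac12]$ is the same as the paper's, but you obtain the core inequality differently.

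The paper avoids logarithms and derivatives. Concavity of $\sin\pi t$ on $[0,\tfrac12]$ gives the chord bound $\sin\pi t\ge 2t$. Hence $\cos^2\pi t=1-\sin^2\pi t\le 1-4t^2\le e^{-4t^2}$, and one takes square roots. This handles the closed interval in one step, with no separate case at $t=\tfrac12$. It lands exactly on the constant $2$ used later in Lemma~\ref{lem:unique} and Case~III.

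Your monotonicity argument for $g(t)=\log\cos\pi t+2t^2$, using $\tan s\ge s$, needs the endpoint treated separately. In exchange it gives, with no extra effort, the optimal constant $\pi^2/2$, which matches $\log\cos\pi t=-\tfrac{\pi^2}{2}t^2+O(t^4)$. The paper has no use for that extra slack.

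Incidentally, the Taylor route you set aside is not messy. Combined with $e^{-2t^2}\ge 1-2t^2$, it reduces to $\pi^4t^2/24\le\pi^2/2-2$, which holds on all of $[0,\tfrac12]$.
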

\begin{proof}
Let $t=\nint{x}\in[0,\frac12]$. Since $|\cos\pi x|$ has period $1$ and is even, $|\cos\pi x|=\cos\pi t$. The function $\sin\pi t$ is concave on $[0,\frac12]$ and equals $0$ and $1$ at the endpoints, so it lies above the chord: $\sin\pi t\ge2t$. Hence $\cos^2\pi t=1-\sin^2\pi t\le1-4t^2\le e^{-4t^2}$, using $1-z\le e^{-z}$. Taking square roots gives the claim.
\end{proof}

The key question of this section is the following. Fix the row labels $\xi$ and a column $u$. The label of this column is a residue $\zeta_u\in\Z/u\Z$, so it can take $u$ different values; changing it from $\zeta_u'$ to $\zeta_u$ means passing between two frequencies that agree modulo every prime of the graph except $u$. Can two different values $\zeta_u\ne\zeta_u'$ both give this column a large weight? To compare them we need to know how the phases of a column change when the rows are kept fixed and only the column label changes. If $h\equiv h'\pmod v$ and $h-h'\equiv s\pmod u$, then
\begin{equation}\label{eq:pf}
 \varphi_{vu}(h)-\varphi_{vu}(h')\equiv\frac{s\bar v}u\pmod1,
\end{equation}
because $h-h'=vk$ for an integer $k$ with $vk\equiv s\pmod u$, i.e.\ $k\equiv s\bar v\pmod u$, and then $(h-h')/(uv)=k/u$.

\subsection{Separation}
Put
\[
 \delta=\frac{1}{400\log y},\qquad \varepsilon=\exp\Big(-\frac{|W|\,\delta^2}{8}\Big)\le\exp\Big(-\frac{y^2}{2\cdot10^7\log^3y}\Big).
\]
Here $\delta$ is the threshold below which a phase counts as ``almost an integer'', and $\varepsilon$, which is smaller than any negative power of $y$, is the size below which a column weight counts as negligible; the bound for $\varepsilon$ follows from \eqref{eq:basic}.

Now take two frequencies $h,h'$ with the same row labels $\xi$ that differ only at the column $u$, say $h-h'\equiv s\not\equiv0\pmod u$. By \eqref{eq:pf}, their phases in row $w$ differ by
\[
 \varphi_{wu}(h)-\varphi_{wu}(h')\equiv\frac{s\bar w}{u}\pmod1 .
\]
Two points are worth stressing. First, this difference does not depend on the row labels at all: the rows are the same for $h$ and $h'$, so $\xi_w$ cancels, and the difference depends only on $s$, $u$ and the prime $w$ itself. Second, we want the difference to be clearly visible, i.e.\ $\nint{s\bar w/u}>\delta$, in many rows. Call $w$ \emph{invisible} if $\nint{s\bar w/u}\le\delta$. For such $w$ there is a small integer $t$, $|t|\le\delta u$, with $tw\equiv s\pmod u$, i.e.\ $tw=s+u\ell$, and $\ell$ takes only about $4\delta y^2$ values. So every invisible $w$ divides one of these few nonzero integers $s+u\ell$, each of which has at most $5$ prime factors exceeding $y^2$. Hence there are few invisible $w$: at most half of $W$, by the next lemma. In the other half of the rows the two phases differ by more than $\delta$, so they cannot both be close to an integer, whatever the row labels are. This is what Lemma~\ref{lem:unique} exploits.

\begin{lemma}\label{lem:count}
Let $u\in U$ and $s\not\equiv0\pmod u$. If $y$ is large, then
\[
 \#\{w\in W:\ \nint{s\bar w/u}\le\delta\}\le 5(4\delta y^2+2)\le |W|/2 .
\]
\end{lemma}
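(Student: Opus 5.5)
We follow the sketch given just before the statement. Fix $u\in U$ and $s\not\equiv0\pmod u$, and let $w\in W$ be invisible, i.e.\ $\nint{s\bar w/u}\le\delta$. Let $t$ be the integer nearest to $s\bar w\cdot$ (as a residue) in the sense that $s\bar w\equiv t\pmod u$ with $|t|\le\delta u$; such $t$ exists because $\nint{s\bar w/u}\le\delta$ means the balanced representative of $s\bar w$ modulo $u$ has absolute value at most $\delta u$. Multiplying by $w$ gives $tw\equiv s\pmod u$. We may take $s$ to be its balanced representative, $|s|<u/2$. Then $tw=s+u\ell$ for an integer $\ell$ with
\[
 |\ell|\le\frac{|t|w+|s|}{u}\le\delta w+\tfrac12\le 2\delta y^2+\tfrac12,
\]
so $\ell$ ranges over at most $4\delta y^2+2$ integers. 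Moreover $s+u\ell\ne0$, since $s\not\equiv0\pmod u$, and $t\ne0$ for the same reason.

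Next, every invisible $w$ divides $s+u\ell$ for one of these admissible $\ell$. Each such integer is nonzero and satisfies $|s+u\ell|\le u(|\ell|+1)\le y^9(2\delta y^2+2)\le y^{11}$ for large $y$ (as $\delta<1$). Since every $w\in W$ exceeds $y^2$, a nonzero integer of absolute value at most $y^{11}<(y^2)^6$ has at most $5$ distinct prime factors in $W$. Summing over the admissible $\ell$ gives at most $5(4\delta y^2+2)$ invisible $w$, which is the first inequality.

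For the second inequality we use \eqref{eq:basic}, $|W|\ge y^2/(8\log y)$, together with $\delta=1/(400\log y)$. These give $20\delta y^2=y^2/(20\log y)\le \tfrac25|W|$, and the remaining term $10$ is at most $\tfrac1{10}|W|$ for large $y$. Hence $5(4\delta y^2+2)\le|W|/2$.

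The only delicate point is the bookkeeping of sizes. The bound on $|\ell|$ must use $w\le2y^2$ rather than anything involving $u$, and the bound on $|s+u\ell|$ must stay below $y^{12}$ so that the count of large prime factors is $5$. Both are routine once $s$ is taken in balanced form.
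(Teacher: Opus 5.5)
Your proof is correct and follows the paper's argument essentially step by step. Both proofs take balanced representatives $s$ and $t$ with $tw=s+u\ell$ and bound $|\ell|\le 2\delta y^2+\frac12$, which leaves at most $4\delta y^2+2$ values of $\ell$. Each nonzero $s+u\ell$ has size at most $y^{11}$, so it has at most five prime factors exceeding $y^2$. The final comparison with $|W|/2$ then uses \eqref{eq:basic} and $\delta=1/(400\log y)$. The only differences, such as your slightly cruder bound $|s+u\ell|\le u(|\ell|+1)$, are cosmetic.
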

\begin{proof}
Take $s$ balanced, i.e.\ $|s|\le u/2$. For a counted $w$, let $t$ be the balanced representative of $s\bar w$ modulo $u$; then $\nint{s\bar w/u}=|t|/u$, so $|t|\le\delta u$. Since $tw\equiv s\pmod u$, we can write $tw=s+u\ell$ with an integer $\ell$. From $|t|\le\delta u$, $w\le2y^2$ and $|s|\le u/2$ we get $|u\ell|\le2\delta y^2u+u/2$, i.e.\ $|\ell|\le2\delta y^2+\frac12$. So there are at most $4\delta y^2+2$ possible values of $\ell$.

Fix such an $\ell$. The integer $s+u\ell$ is nonzero, because $u\nmid s$. Its absolute value is at most $u(2\delta y^2+1)\le y^9\cdot y^2=y^{11}$ for large $y$. And it is divisible by every counted $w$ belonging to this $\ell$. A product of six primes exceeding $y^2$ exceeds $y^{12}$, so a nonzero integer of absolute value at most $y^{11}$ has at most $5$ prime factors exceeding $y^2$. Hence each $\ell$ accounts for at most $5$ values of $w$, which gives the first inequality. For the second, $5(4\delta y^2+2)=y^2/(20\log y)+10$, and this is at most $y^2/(16\log y)\le|W|/2$ for large $y$, by \eqref{eq:basic}.
\end{proof}

Consequently two different labels of one column are far apart in many test rows, so they cannot both give the column a large weight. Call a label $\zeta_u$ \emph{heavy} (for given $\xi$) if $F_u(\xi,\zeta_u)>\varepsilon$.

\begin{lemma}[Separation]\label{lem:unique}
Let $u\in U$, $\xi\in G_V$ and $\zeta_u\not\equiv\zeta_u'\pmod u$.
\begin{enumerate}[label=(\roman*)]
\item $F_u(\xi,\zeta_u)F_u(\xi,\zeta_u')\le\varepsilon^2$. In particular, at most one label $\zeta_u$ is heavy.
\item If $\nint{\varphi_{wu}(\xi,\zeta'_u)}\le\delta/2$ for all $w\in W$, then $F_u(\xi,\zeta_u)\le\varepsilon$.
\end{enumerate}
\end{lemma}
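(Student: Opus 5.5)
The plan is to use only the test rows $w\in W$: every entry of the table is at most $1$, so $F_u(\xi,\zeta_u)\le\prod_{w\in W}|\cos\pi\varphi_{wu}(\xi,\zeta_u)|$, and we bound these entries with Lemma~\ref{lem:cos}. Put $s\equiv\zeta_u-\zeta_u'\not\equiv0\pmod u$. Take frequencies $h\leftrightarrow(\xi,\zeta)$ and $h'\leftrightarrow(\xi,\zeta')$ that agree at every prime except $u$. By \eqref{eq:pf}, the difference $\varphi_{wu}(\xi,\zeta_u)-\varphi_{wu}(\xi,\zeta_u')\equiv s\bar w/u\pmod1$ does not depend on $\xi$. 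Call $w$ \emph{visible} if $\nint{s\bar w/u}>\delta$. By Lemma~\ref{lem:count}, at least $|W|/2$ test primes are visible.

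For (i), fix a visible $w$ and write $x=\varphi_{wu}(\xi,\zeta_u)$ and $x'=\varphi_{wu}(\xi,\zeta_u')$. Since $\nint{\cdot}$ satisfies the triangle inequality modulo $1$, we get $\nint{x}+\nint{x'}\ge\nint{x-x'}>\delta$. Hence $\nint{x}^2+\nint{x'}^2\ge\frac12(\nint{x}+\nint{x'})^2>\delta^2/2$. Lemma~\ref{lem:cos} then gives
\[
|\cos\pi x|\,|\cos\pi x'|\le\exp\bigl(-2(\nint{x}^2+\nint{x'}^2)\bigr)\le\exp(-\delta^2).
\]
Take the product over the at least $|W|/2$ visible $w$, and bound all other entries by $1$. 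This gives $F_u(\xi,\zeta_u)F_u(\xi,\zeta_u')\le\exp(-|W|\delta^2/2)\le\exp(-|W|\delta^2/4)=\varepsilon^2$. If two distinct labels were both heavy, the product of their weights would exceed $\varepsilon^2$. So at most one label is heavy.

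For (ii), fix again a visible $w$. The hypothesis gives $\nint{x'}\le\delta/2$, so $\nint{x}\ge\nint{x-x'}-\nint{x'}>\delta/2$. Lemma~\ref{lem:cos} gives $|\cos\pi x|\le\exp(-\delta^2/2)$. Taking the product over the visible $w$ yields $F_u(\xi,\zeta_u)\le\exp(-|W|\delta^2/4)\le\exp(-|W|\delta^2/8)=\varepsilon$.

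The main obstacle is already handled by Lemma~\ref{lem:count}: it guarantees many visible rows, uniformly in $\xi$. What remains is to check the constants. The gap between $\exp(-|W|\delta^2/4)$ and the defining exponent $|W|\delta^2/8$ of $\varepsilon$ leaves room to spare. One also has to make sure the phase-difference identity \eqref{eq:pf} is applied with the row label of $w$ held fixed, which is exactly how $h$ and $h'$ were chosen.
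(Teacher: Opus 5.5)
Your proof is correct and follows the paper's argument. Lemma~\ref{lem:count} supplies at least $|W|/2$ visible test rows, the triangle inequality for $\nint{\cdot}$ forces large phases in those rows, and Lemma~\ref{lem:cos} turns this into the bounds $\varepsilon^2$ and $\varepsilon$. The only difference in (i) is cosmetic: you use $\nint{x}^2+\nint{x'}^2\ge\frac12(\nint{x}+\nint{x'})^2$ to get a factor $e^{-\delta^2}$ per visible row, whereas the paper notes that one of the two phases exceeds $\delta/2$ and gets $e^{-\delta^2/2}$, which already yields exactly $\varepsilon^2$.
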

\begin{proof}
Let $s=\zeta_u-\zeta_u'\not\equiv0\pmod u$. By Lemma~\ref{lem:count}, at least $|W|/2$ primes $w\in W$ have $\nint{s\bar w/u}>\delta$. For each such $w$, \eqref{eq:pf} says that the two phases $\varphi_{wu}(\xi,\zeta_u)$ and $\varphi_{wu}(\xi,\zeta'_u)$ differ by $s\bar w/u$ modulo $1$, so by the triangle inequality for $\nint{\cdot}$,
\[
 \nint{\varphi_{wu}(\xi,\zeta_u)}+\nint{\varphi_{wu}(\xi,\zeta'_u)}\ge\nint{s\bar w/u}>\delta .
\]
So one of the two phases exceeds $\delta/2$, and by Lemma~\ref{lem:cos} the corresponding cosine factor is at most $e^{-2(\delta/2)^2}=e^{-\delta^2/2}$. All other cosine factors are at most $1$. Multiplying over these $w$ gives
\[
 F_u(\xi,\zeta_u)F_u(\xi,\zeta_u')\le\exp\Big(-\frac{|W|}2\cdot\frac{\delta^2}2\Big)=\varepsilon^2,
\]
which is (i). If two labels were heavy, their product would exceed $\varepsilon^2$; so at most one is heavy. Under the hypothesis of (ii), the phase of $\zeta_u'$ at every $w$ is at most $\delta/2$, so for each of the $w$ above the large phase is the one of $\zeta_u$. The same computation, now for $F_u(\xi,\zeta_u)$ alone, gives $F_u(\xi,\zeta_u)\le\varepsilon^2\le\varepsilon$.
\end{proof}

\subsection{No wrap-around}
The next lemma says when a column can have all its entries close to $1$. It is the Chinese remainder theorem without wrap-around: small integers that are congruent modulo $u$ must be equal. Put
\[
 \delta_a=\frac1{8y^2}.
\]

In words, the lemma below says: for a column to be almost perfect, i.e.\ for all its entries to be very close to $1$ (every phase at most $\delta_a$), the row labels must be coherent. Case~III uses the contrapositive: if the rows are incoherent, then for every column and every label some entry has phase larger than $\delta_a$, so by Lemma~\ref{lem:cos} the column weight is at most
\[
 e^{-2\delta_a^2}=e^{-1/(32y^4)}\approx1-\frac1{32y^4}.
\]
This loss is tiny. Incoherent rows may well give a column a fairly large weight; the lemma only guarantees that the weight falls short of $1$ by about $y^{-4}$. It becomes effective only together with two other facts. By separation (Lemma~\ref{lem:unique}), at most one label per column is heavy, so $S_u(\xi)\le e^{-2\delta_a^2}+Y\varepsilon\le e^{-\delta_a^2}$ for large $y$, the light labels being absorbed (\S\ref{sec:minor}). And there are $|U|\ge\tau y^8/H_V$ columns, so these small losses multiply to $\exp(-\tau y^4/(64H_V))$, which beats the at most $2b\cdot16^{y^2}$ row patterns (\S\ref{sec:minor}). Note that the two lemmas of this section use very different thresholds: separation concerns weights above $\varepsilon$, a very weak requirement, whereas the no-wrap-around lemma concerns weights above $e^{-2\delta_a^2}$, i.e.\ columns that are almost perfect.

\begin{lemma}[No wrap-around]\label{lem:admissible}
Let $u\in U$, $\xi\in G_V$ and $\zeta_u\in\Z/u\Z$. If $\nint{\varphi_{vu}(\xi,\zeta_u)}\le\delta_a$ for all $v\in V$, then there is an integer $M$ with $|M|\le y^7/4$ and $\xi_v\equiv M\pmod v$ for all $v\in V$. In particular, $\xi$ is coherent.
\end{lemma}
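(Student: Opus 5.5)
The idea is to lift every phase in the column to an integer numerator of controlled size, and then use the fact that two small integers congruent modulo $u$ must be equal. This forces all the numerators to coincide, and the common value is the required $M$.

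\emph{Step 1: lifting the phases.} For each $v\in V$, let $J_v$ be the balanced representative modulo $uv$ of the unique class that is $\equiv\xi_v\pmod v$ and $\equiv\zeta_u\pmod u$. By the definition of $\varphi_{vu}(\xi,\zeta_u)$, we have $\varphi_{vu}\equiv J_v/(uv)\pmod 1$. Since $|J_v/(uv)|\le\frac12$, this gives $\nint{\varphi_{vu}}=|J_v|/(uv)$. The hypothesis therefore yields
\[
 |J_v|\le\delta_a uv=\frac{uv}{8y^2}\qquad(v\in V).
\]
Every $v\in V$ satisfies $v\le2y^2$: for $v\in W$ by definition, for $v=2$ trivially, and for $v\in B$ because $y>\max B$. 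Hence $|J_v|\le u/4$ for all $v\in V$.

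\emph{Step 2: no wrap-around.} All the $J_v$ are congruent to $\zeta_u$ modulo $u$, so $J_v\equiv J_{v'}\pmod u$ for any $v,v'\in V$. By Step 1, $|J_v-J_{v'}|\le u/2<u$. An integer divisible by $u$ with absolute value less than $u$ is $0$, so $J_v=J_{v'}$. Call the common value $M$. Then $M=J_v\equiv\xi_v\pmod v$ for every $v\in V$.

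\emph{Step 3: size of $M$.} The size is read off from the row $v=2$, which is why the prime $2$ was placed in $V$:
\[
 |M|=|J_2|\le\frac{2u}{8y^2}=\frac{u}{4y^2}\le\frac{y^9}{4y^2}=\frac{y^7}4,
\]
using $u\le Y\le y^9$. Since $y^7/4\le y^7$, the labels $\xi$ are coherent. The only point needing care is Step 1, namely that the balanced lift makes $\nint{\cdot}$ exactly $|J_v|/(uv)$ and that every $v\in V$ is at most $2y^2$. Once that is in place, the congruence argument of Step 2 is immediate.
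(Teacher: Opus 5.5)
Your proof is correct and follows the paper's argument step for step. It lifts each phase to the balanced representative $J_v$ with $|J_v|\le\delta_a uv\le u/4$, concludes that all $J_v$ are equal because they are congruent modulo $u$ and differ by less than $u$, and reads off $|M|\le y^7/4$ from the row $v=2$. You also check explicitly that $v\le 2y^2$ for $v\in B$, which the paper leaves implicit.
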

\begin{proof}
For each $v$, let $J_v$ be the balanced representative modulo $uv$ of the integers that are $\equiv\xi_v\pmod v$ and $\equiv\zeta_u\pmod u$. Then $\varphi_{vu}=J_v/(uv)\bmod1$ with $|J_v|\le uv/2$, so $\nint{\varphi_{vu}}=|J_v|/(uv)$, and the hypothesis gives $|J_v|\le\delta_auv\le u/4$, since $v\le2y^2$. All the $J_v$ are $\equiv\zeta_u\pmod u$, so any two of them differ by a multiple of $u$; but they differ by at most $u/4+u/4<u$, so they are all equal to one integer $M$. Then $M=J_v\equiv\xi_v\pmod v$ for every $v$. Finally, the case $v=2$ gives $|M|=|J_2|\le2\delta_au\le Y/(4y^2)\le y^7/4$.
\end{proof}

\section{Cases II and III: the minor arcs}\label{sec:minor}

\begin{proposition}\label{prop:minor}
For all sufficiently large $y$ (in terms of $b$),
\[
 \sum_{h\text{ in cases II and III}}|\phi(h)|\le\frac12 .
\]
\end{proposition}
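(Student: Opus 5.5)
We split the sum according to the row labels and use the factorisation \eqref{eq:factor}, treating case~III and case~II separately and then adding the two bounds.

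\emph{Case~III.} Fix incoherent $\xi$ and a column $u$. By the contrapositive of Lemma~\ref{lem:admissible}, every label $\zeta_u$ has some row $v$ with $\nint{\varphi_{vu}}>\delta_a$. Lemma~\ref{lem:cos} then gives $F_u(\xi,\zeta_u)\le e^{-2\delta_a^2}$ for every label. By Lemma~\ref{lem:unique}(i) at most one label is heavy, and the other labels contribute at most $u\varepsilon\le y^9\varepsilon$. Hence
\[
S_u(\xi)\le e^{-2\delta_a^2}+y^9\varepsilon\le e^{-\delta_a^2},
\]
because $y^9\varepsilon$ is far smaller than $\delta_a^2\approx y^{-4}/64$. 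Taking the product over $U$ and using $|U|\ge\tau y^8/H_V$ (Lemma~\ref{lem:tuning}), each incoherent $\xi$ contributes at most $\exp(-\tau y^4/(64H_V))$ to \eqref{eq:factor}. The number of $\xi$ is at most $\prod_V v\le 2b\cdot16^{y^2}$, by \eqref{eq:basic}. Since $H_V$ is bounded in terms of $b$, the case~III total is at most $2b\,16^{y^2}e^{-cy^4}$ with $c=\tau/(64H_V)$, and this is below $\frac14$ for large $y$.

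\emph{Case~II.} Fix $M$ with $|M|\le y^7$ and $\xi\equiv M$. For the matching label $\zeta_u\equiv M$, the phase at each $w$ is $M/(uw)$, of absolute value at most $y^7/y^{10}\le\delta/2$. So Lemma~\ref{lem:unique}(ii) gives $F_u(\xi,\zeta_u)\le\varepsilon$ for every mismatched label. Now expand the sum over $\zeta$ by the set $T\ne\emptyset$ of mismatched columns. Columns outside $T$ have weight at most $1$, and each column in $T$ contributes at most $(u-1)\varepsilon\le y^9\varepsilon$. The total for fixed $M$ is therefore at most
\[
\sum_{\emptyset\ne T\subseteq U}(y^9\varepsilon)^{|T|}=(1+y^9\varepsilon)^{|U|}-1\le e^{y^{18}\varepsilon}-1\le y^{18}\varepsilon\,e^{y^{18}\varepsilon},
\]
using $|U|\le y^9$. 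Summing over the at most $3y^7$ values of $M$ gives at most $3y^{25}\varepsilon e^{y^{18}\varepsilon}$, which tends to $0$ since $\varepsilon$ decays like $\exp(-y^2/\log^3y)$. So the case~II total is below $\frac14$.

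Adding the two cases gives the bound $\frac12$. The main obstacle is case~III: the per-column loss $e^{-\delta_a^2}$ must survive absorbing all the light labels, and we need $y^9\varepsilon\ll y^{-4}$, which holds comfortably. Beyond that, the exponent $\tau y^4/H_V$ must beat $y^2\log16+\log 2b$, which is exactly why $U$ sits at scale $y^8$ while $V$ sits at scale $y^2$. Constants depending on $b$ are absorbed by taking $y$ large in terms of $b$.
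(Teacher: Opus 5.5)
Your proof is correct and follows the paper's argument essentially step for step. In case~III you use no wrap-around plus separation to get $S_u(\xi)\le e^{-2\delta_a^2}+Y\varepsilon\le e^{-\delta_a^2}$, which beats the $2b\cdot16^{y^2}$ row patterns. In case~II you use Lemma~\ref{lem:unique}(ii) and expand over nonempty sets of mismatched columns to get $3y^{25}\varepsilon e^{y^{18}\varepsilon}$. The one step you leave implicit is $e^{-\delta_a^2}-e^{-2\delta_a^2}\ge\delta_a^2/2$, which is what lets $y^9\varepsilon$ be absorbed; the paper states this inequality explicitly.
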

\begin{proof}
We use \eqref{eq:factor} and show that each of the two cases contributes at most $\frac14$.

\emph{Case III (incoherent rows): a small loss in every column.} Fix an incoherent $\xi$ and a column $u$. By Lemma~\ref{lem:admissible}, every label $\zeta_u$ has some $v$ with $\nint{\varphi_{vu}(\xi,\zeta_u)}>\delta_a$: otherwise $\xi$ would be coherent. So by Lemma~\ref{lem:cos} every label has $F_u(\xi,\zeta_u)\le e^{-2\delta_a^2}$. By Lemma~\ref{lem:unique}(i), at most one label is heavy, and the other at most $u\le Y$ labels have $F_u(\xi,\zeta_u)\le\varepsilon$. Hence
\[
 S_u(\xi)\le e^{-2\delta_a^2}+Y\varepsilon\le e^{-\delta_a^2}.
\]
The last inequality holds for large $y$, because $e^{-\delta_a^2}-e^{-2\delta_a^2}\ge\delta_a^2/2=1/(128y^4)$, while $Y\varepsilon\le y^9\varepsilon$ is much smaller. So each of the $|U|$ columns loses a factor $e^{-\delta_a^2}$. Summing over all incoherent $\xi$, of which there are at most $|G_V|=\prod_Vv$, and using \eqref{eq:basic} and $|U|\ge\tau y^8/H_V$ from Lemma~\ref{lem:tuning}, the total is at most
\[
 2b\cdot16^{y^2}\exp(-\delta_a^2|U|)\le2b\cdot16^{y^2}\exp\Big(-\frac{\tau y^4}{64H_V}\Big)\le\frac14
\]
for large $y$, since $16^{y^2}=e^{y^2\log16}$ grows much more slowly than $e^{cy^4}$.

\emph{Case II (coherent rows, some mismatched column): a huge loss in each mismatched column.} Fix a coherent $\xi\equiv M$. The matching label $M\bmod u$ has small phases in the test rows: the integer $J=M$ is $\equiv\xi_w\pmod w$ and $\equiv M\pmod u$, so for $w\in W$,
\[
 \nint{\varphi_{wu}(\xi,M\bmod u)}\le\frac{|M|}{uw}\le\frac{y^7}{y^8\cdot y^2}=y^{-3}\le\frac\delta2
\]
for large $y$. By Lemma~\ref{lem:unique}(ii), every mismatched label $\zeta_u\not\equiv M$ has $F_u(\xi,\zeta_u)\le\varepsilon$. So we can write $S_u(\xi)=F_u(\xi,M\bmod u)+R_u$, where $R_u$, the sum over the fewer than $Y$ mismatched labels, satisfies $R_u\le Y\varepsilon$.

Now expand the product:
\[
 \prod_{u\in U}S_u(\xi)=\sum_{D\subseteq U}\ \prod_{u\notin D}F_u(\xi,M\bmod u)\prod_{u\in D}R_u ,
\]
where $D$ is the set of mismatched columns. The term $D=\emptyset$ is the single frequency $h=M$ of case~I. Since every $F_u\le1$, the remaining terms, which are exactly the frequencies of case~II with $\xi\equiv M$, contribute at most
\[
 \sum_{\emptyset\ne D\subseteq U}(Y\varepsilon)^{|D|}=(1+Y\varepsilon)^{|U|}-1\le|U|Y\varepsilon\,e^{|U|Y\varepsilon},
\]
using $(1+z)^n-1\le e^{nz}-1\le nz\,e^{nz}$. Here $|U|,Y\le y^9$. Summing over the $2y^7+1\le3y^7$ possible values of $M$, case~II contributes at most $3y^{25}\varepsilon\,e^{y^{18}\varepsilon}$, which is at most $\frac14$ for large $y$ because $y^k\varepsilon\to0$ for every $k$.
\end{proof}

\begin{proof}[Proof of Proposition~\ref{prop:core}]
Split the right-hand side of \eqref{eq:N} as $\Sigma_{\rm maj}+\Sigma_{\rm min}$ according to the three cases. By Proposition~\ref{prop:major}, $\Sigma_{\rm maj}$ is real and at least $1/L$. By Proposition~\ref{prop:minor}, $|\Sigma_{\rm min}|\le\frac1L\sum_{|h|>y^7}|\phi(h)|\le\frac1{2L}$. The left-hand side $N/2^{|A|}$ is real, so taking real parts, for $y$ large,
\[
 \frac{N}{2^{|A|}}=\Sigma_{\rm maj}+\operatorname{Re}\Sigma_{\rm min}\ \ge\ \frac1L-\frac1{2L}>0 .
\]
Thus $N>0$, i.e.\ some $S\subseteq A$ has $\sum_{n\in S}1/n\equiv\tau\pmod1$.
\end{proof}

This completes the proof of Theorem~\ref{thm:small}, and hence of Theorem~\ref{thm:main}.

\section*{Use of AI}
This work was carried out as a human--AI collaboration. The author directed the mathematics and is responsible for all content. AI assistants were used throughout the project: to explore constructions and proof strategies, to write and run programs for the numerical experiments that guided the construction, to check and simplify the arguments through critical reviews, to write the Lean formalisation, and to draft and revise the exposition. Precisely because the development was AI-assisted, the proof has been formalised in Lean~4 with Mathlib (see ``Formal verification'' in \S1): all proof steps, including those written with AI assistance, are checked by Lean's kernel, conditional only on the stated inequality of Ramanujan. It uses only elementary inputs, and it is also short enough to be checked by hand.

\section*{Acknowledgements}
We thank Yuren Tang, whose Lean development \cite{Tan26} was the first to show that the circle method works for this problem and provided its basic framework. The present proof adopts that framework; what we add is a different construction that simplifies the treatment of the minor arcs and of the main term (see ``Relation to previous work'' in \S1).

\end{document}